\documentclass{amsart}
\usepackage{amscd,amsmath}
\usepackage[dvips]{graphicx}
\usepackage{hyperref}
\usepackage[latin1]{inputenc}
\usepackage{array}
\usepackage[all]{xy}

\newcommand{\sing}{{\rm{Sing}}}

\newtheorem{theorem}{\bf Theorem}
\newtheorem{proposition}{\bf Proposition}

\newtheorem{remark}{Remark}

\newtheorem{example}{\bf Example}

\begin{document}

\title[Logarithmic Hesse's problem]{Logarithmic Hesse's problem}

\author{T. FASSARELLA}
\address{Departamento de Análise -- IM \\
UFF \\
Mário Santos Braga S/N -- Niterói\\
24.020-140 RJ Brasil} \email{thiago@impa.br}

\keywords{Logarithmic Polar Map, Vanishing Hessian} 
\subjclass{14N15,14J70,20G20}

\begin{abstract}
We show that if a Laurent polynomial on the coordinate ring of the complex algebraic torus on $n$ variables has vanishing logarithmic  Hessian, then up to an automorphism of the torus, the Laurent polynomial depends on at most $n-1$ variables.  
\end{abstract}

\maketitle

\section{Introduction}
Let $f\in\mathbb C[x_1,...,x_n]$, $n\ge 1$, be a polynomial and consider the \textit{Hessian matrix} 
\begin{eqnarray*}
Hf(x):=\left(f_{x_ix_j}(x)\right)_{1\le i,j\le n}.
\end{eqnarray*}
The polynomial $f$ has \textit{vanishing Hessian} if the determinant ${\rm{det}}Hf \in \mathbb C[x_1,...,x_n]$ is null.  This is equivalent to the condition on the derivatives of $f$ being algebraically  dependent, that is, $f$ has vanishing Hessian if and only if the \textit{affine polar map}
\begin{eqnarray*} 
\nabla_f:\mathbb C^n &\longrightarrow& \mathbb C^n\\
             x &\mapsto& \left( f_{x_1}(x),...,f_{x_n}(x)\right)
\end{eqnarray*}
is not dominant. 

Hesse claimed in \cite{H1,H2} that a homogeneous polynomial on $n$ variables has vanishing Hessian if and only if after a suitable coordinate change, the polynomial depends on at most $n-1$ variables. While the "if" implication is trivial, the "only if" statement is not trivial at all. The Hesse problem was taken up by Gordan--Noether in \cite{GN}, who showed that the question has an affirmative answer for $n \le 4$, but is false in general for $n \ge 5$. See \cite{permutti1957certe, permutti1963sul, permutti1964certe, lossen2004does} for a review of the counterexamples constructed by Gordan--Noether. See also \cite{CRS} for a modern overview of the known methods to deal with the problem. The case of nonhomogeneous polynomials was studied in \cite{BV} where a classification for small $n$ was obtained. 

In this paper we will consider polynomials $f\in S_{n}=\mathbb C[x_1,x_1^{-1},...,x_n,x_n^{-1}]$, that is, in the coordinate ring  of the torus $(\mathbb C^*)^n$. An element $f \in S_{n}$ is called a \textit{Laurent polynomial} on $n$ variables. In this case we shall work with the \textit{logarithmic polar map} associated to  $f$ defined as follows 
\begin{eqnarray*}
L_f:(\mathbb C^*)^n &\longrightarrow& \mathbb C^n\\
             x &\mapsto& \left( x_1f_{x_1}(x),...,x_nf_{x_n}(x)\right).
\end{eqnarray*}

As in the affine case, we note that $L_f$ can be seen as the logarithmic Gauss map associated to the hypersurface $Z=\{f=0\}\subset (\mathbb C^*)^n$, that is, the map that takes $x\in Z$ to the left translation to unity  of the hyperplane $T_xZ\subset T_x(\mathbb C^*)^n$ (see example \ref{Ex:gauss}). 

We consider now the  matrix
\begin{eqnarray*}
Af(x):= \left((x_i\cdot f_{x_i})_{x_j} (x)\right)_{i,j=1,...,n},
\end{eqnarray*}
which will be called \textit{logarithmic Hessian matrix} of $f$. We shall say that $f$ has \textit{vanishing logarithmic Hessian} if the determinant ${\rm{det}}Af \in S_{n}$  is null.  If $f$ is a Laurent polynomial, the condition of vanishing logarithmic Hessian is equivalent to the non--dominance hypothesis on $L_f$. 

The main objective of this paper is to show that -- as a counterpart to Hesse's problem -- in the case of Laurent polynomials on the torus, the analogue of Hesse's claim is true. That is, if  a Laurent polynomial on $n$ variables has vanishing logarithmic Hessian then after an automorphism of the torus, the Laurent polynomial depends on at most $n-1$ variables. This is  Theorem \ref{T:Hesse}. Before that we show that the fibers of $L_f$ have a good behavior, more precisely, under the hypothesis of vanishing logarithmic Hessian, the logarithmic Gauss map associated to the foliation determined by its fibers is constant. This is Proposition \ref{P:Hesse}.

\section{Fibers of the affine polar Map}

In order to study the fibers of the logarithmic polar map we first restrict our attention to the fibers of the affine polar map. Let $f$ be a holomorphic map defined on an open subset $U$ of $\mathbb C^n$. Taking the derivatives of $f$ we can define, as in the polynomial case, the affine polar map 
\begin{eqnarray*}
\nabla_f:U &\longrightarrow& \mathbb C^n\\
    x &\mapsto& (f_{x_1}(x),...,f_{x_n}(x)). 
\end{eqnarray*}
In the following proposition we use the same idea as in \cite[Appendix]{FW} to study its fibers. 

\begin{proposition}\label{T:linearfiber}
If the affine polar map $\nabla_f:U \longrightarrow \mathbb C^n$ has constant rank $n-k<n$, then a fiber of $\nabla_f$ is an union of open subsets of affine linear subspaces of dimension $k$ of $\mathbb C^n$. 
\end{proposition}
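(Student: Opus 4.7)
The plan is to show that along each connected component of a fiber $F = \nabla_f^{-1}(c)$, the tangent space $T_xF$ is the same $k$-dimensional subspace $V$ of $\mathbb{C}^n$ for every $x$ in that component. Once this is established, each component is forced to be an open subset of a single affine $k$-plane parallel to $V$, and the fiber is the union of such open subsets taken over its components.

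The key observation is that $d(\nabla_f)_x = Hf(x)$ is a symmetric matrix. With respect to the standard non-degenerate symmetric bilinear form $\langle u,v\rangle = \sum_i u_iv_i$ on $\mathbb{C}^n$ this gives the identity
\[
\ker Hf(x) \;=\; \bigl(\mathrm{Im}\,Hf(x)\bigr)^{\perp}.
\]
By the constant-rank hypothesis, for each $x_0 \in U$ the image $\nabla_f(U)$ is, in a neighborhood of $c = \nabla_f(x_0)$, a smooth analytic submanifold $Y_{x_0}$ of dimension $n-k$, and $\mathrm{Im}\,Hf(x) = T_{\nabla_f(x)}Y_{x_0}$ for $x$ close to $x_0$. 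Covering a path inside the connected component of $F$ through $x_0$ by finitely many such neighborhoods, and noting that overlapping local images share an open neighborhood of $c$ (and therefore the same germ of manifold at $c$), one concludes $\mathrm{Im}\,Hf(x) = T_cY_{x_0}$ for every $x$ in the component. Taking orthogonal complements yields $\ker Hf(x) = (T_cY_{x_0})^{\perp} =: V$, a fixed $k$-plane along the component.

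To finish, take any smooth path $\gamma$ lying in this connected component. Differentiating the relation $\nabla f(\gamma(t)) = c$ gives $Hf(\gamma(t))\gamma'(t) = 0$, hence $\gamma'(t) \in \ker Hf(\gamma(t)) = V$ for every $t$. Integrating produces $\gamma(t) - \gamma(0) \in V$, so the component lies in the affine $k$-plane $\gamma(0) + V$; being $k$-dimensional it is open in this plane. Distinct connected components of $F$ may a priori sit in different (possibly non-parallel) affine $k$-planes, which accounts for the \emph{union} in the statement. The step I expect to require the most care is the patching argument that promotes the pointwise identification $\mathrm{Im}\,Hf(x) = T_{\nabla_f(x)} Y_{x_0}$ into genuine constancy of $\mathrm{Im}\,Hf$ along each connected component; this is precisely where the constant-rank hypothesis is used in its full strength.
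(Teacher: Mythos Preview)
Your argument is correct and takes a genuinely different route from the paper's. Both proofs exploit the symmetry of the Hessian, but the paper chooses a local parametrization $\varphi:S\times T\to U$ with $S$ the fiber direction, uses self-adjointness to obtain the second-order relation $\varphi_{s_is_j}\in\ker d\nabla_f$, and then appeals to uniqueness for a homogeneous linear ODE system to force the fiber coordinates of $\varphi$ to be affine. You instead use symmetry in the one-line form $\ker Hf(x)=(\mathrm{Im}\,Hf(x))^{\perp}$ and note that along a fiber component $\mathrm{Im}\,Hf(x)$ is the tangent space at the \emph{fixed} value $c$ to the local image submanifold, hence constant; this sidesteps the PDE/ODE machinery altogether and makes the geometric reason for linearity transparent. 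The paper's approach is more computationally self-contained, while yours is shorter and more conceptual but leans on the image-side statement of the constant rank theorem. The patching step you flag does work: if two constant-rank charts overlap at a fiber point, the image of the overlap is itself an $(n-k)$-dimensional germ through $c$ contained in both local images, so the germs (and in particular $T_cY$) coincide; thus $\mathrm{Im}\,Hf$ is locally constant on the fiber and therefore constant on each component. One small wording fix: it is $\nabla_f(U'_{x_0})$ for a sufficiently small neighborhood $U'_{x_0}$ of $x_0$ that is a submanifold near $c$, not $\nabla_f(U)$ itself, which could in principle self-intersect there.
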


\begin{proof}
We set $r=n-k$. Since the problem is local on $U$, one can take a parametrization $\varphi:S\times T \longrightarrow U$, where $S\subset \mathbb C^k$, $T\subset \mathbb C^r$  are open sets and $S\times \{t \}$ are fibers of $\Gamma:=\nabla_f \circ \varphi$. 

We have to show that $\varphi(S\times \{t \})$ is an open subset of some affine linear subspace on $\mathbb C^n$. Denote by $z=(s,t)\in S\times T$ and $\varphi_{s_i}$, $\varphi_{t_j}$, $i=1,...,k$, $j=1,...,r$, the natural derivatives. We will show that  $\varphi(S\times \{t \})\subset F_z$ where
\begin{eqnarray*}
F_{z}=\textrm{span}\{\varphi_{s_1}(z),...,\varphi_{s_k}(z)\}=\textrm{ker}d\nabla_f(x). 
\end{eqnarray*}

We first note that $F_z=\tilde{F_z}$, where
\begin{eqnarray*}
\tilde{F_z}=\{ V \in \mathbb C^n \;|\; <\Gamma_{t_j}(z),V>=0 \;,j=1,...,r\}.
\end{eqnarray*}
Since $d\nabla_f(x)$ self-adjoint and $\Gamma_{t_j}=d\nabla_f\cdot\varphi_{t_j}$ we get
\begin{eqnarray}\label{Eq:adjoint}
<\Gamma_{t_j},\varphi_{s_i}>=<\varphi_{t_j},d\nabla_f\cdot\varphi_{s_i}>=0.
\end{eqnarray}
Hence $F_z\subset \tilde{F_z}$. But $\dim F_z=\dim \tilde{F_z}$, therefore $F_z=\tilde{F_z}$.

Now we claim that $\varphi_{s_is_j}(z)\in \tilde{F_z}$. In fact, it follows from $\Gamma_{s_l}\equiv 0$ for all $l=1,...,k$, and (\ref{Eq:adjoint}) that
\begin{eqnarray*}
<\Gamma_{t_l},\varphi_{s_is_j}>=<\Gamma_{t_l},\varphi_{s_is_j}>+
<\Gamma_{t_ls_i},\varphi_{s_j}>=<\Gamma_{t_l},\varphi_{s_j}>_{s_i}=0.
\end{eqnarray*}
Therefore $\varphi_{s_is_j}(z)\in F_z$, that is, $\varphi$ satisfies the following system of PDE:
\begin{eqnarray}\label{Eq:PDE}
\displaystyle{\varphi_{s_is_j}=\sum_{l=1}^{k}\mu_{ijl}\cdot\varphi_{s_l}},
\end{eqnarray}
for some holomorphic functions $\mu_{ijl}$.

By a linear coordinate change, we may assume that $z=(0,0)$ and 
\begin{eqnarray}\label{Eq:F}
F_z=\{x\in\mathbb C^n \;|\;x_{k+1}=...=x_n=0\}. 
\end{eqnarray} 

In order to show that $\varphi(S\times \{0 \})\subset F_z$, we have to prove that
\begin{eqnarray*}
\varphi_{s_i}^{k+1}(s,0)=...=\varphi_{s_i}^{n}(s,0)=0 \;\;\forall \;i=1,...,k,
\end{eqnarray*}
where $(\varphi_{s_i}^{1}(s,0),...,\varphi_{s_i}^{n}(s,0))$ are the components of $\varphi_{s_i}(s,0)$. By (\ref{Eq:PDE}) and (\ref{Eq:F}), one obtains the following system of PDE:
\[
\left( \begin{array}{c}
\frac{d}{ds_j}\varphi_{s_1}^m \\
\vdots                        \\
\frac{d}{ds_j}\varphi_{s_k}^m 
\end{array} \right)
=\displaystyle{\left(\mu_{ijl}\right)_{1\le i,l\le k}}\cdot
\left( \begin{array}{c}
\varphi_{s_1}^m \\
\vdots                        \\
\varphi_{s_k}^m 
\end{array} \right),
\;\;\;\; \varphi_{s_i}^m(0,0)=0,
\]
for each $j=1,...,k$ and $m=k+1,...,n$.

Denoting by $\xi_i(s_j)=\varphi_{s_i}^m(0,...,0,s_j,0,...,0)$  we obtain the following homogeneous system of ODE:
\[
\left( \begin{array}{c}
\xi '_1 \\
\vdots                        \\
\xi '_k 
\end{array} \right)
=\displaystyle{\left(\mu_{ijl}\right)_{1\le i,l\le k}}\cdot
\left( \begin{array}{c}
\xi_1 \\
\vdots                        \\
\xi_k 
\end{array} \right),
\;\;\;\; \xi_i(0)=0.
\]
This system has only the trivial solution by uniqueness. Therefore $\varphi_{s_i}^{m}(s,0)$ vanish on all coordinate axes for every $i=1,...,k$ and $m=k+1,...,n$. Since we can consider the initial condition at an arbitrary point of one of the coordinate axes, an induction argument on $k$ shows that $\varphi_{s_i}^{k+1}(s,0)=...=\varphi_{s_i}^{n}(s,0)=0$.
\end{proof}

\section{Fibers of the Logarithmic Polar Map}
Let $M$ be a complex variety and $1\le k< \dim(M)$. A \textit {singular holomorphic foliation $\mathcal F$ of dimension $k$} on $M$ is
determined by a line bundle $\mathcal L$ and an element $\omega \in
\mathrm H^0(M, \Omega^{n-k}_{M} \otimes \mathcal L)$ satisfying
\begin{enumerate}
\item[(i)] $\mathrm{codim}(\sing(\omega)) \ge 2$ where $\sing(\omega)
= \{ x \in M \, \vert \, \omega(x) = 0 \}$;
\item[(ii)] $\omega$ is integrable.
\end{enumerate}
By definition $\omega$ is integrable if and only if for every point $x\in M\backslash \sing(\omega)$ there exist a neighborhood $V\subset M$ of $x$ and  $1$-forms $\alpha_1$,...,$\alpha_{n-k} \in \Omega^1_M(V)$ such that
\begin{eqnarray*}
\omega|_V=\alpha_1\wedge \cdots \wedge\alpha_{n-k} \quad \text{and} \quad d\alpha_i \wedge \omega|_V=0 \quad \forall\; i=1,...,n-k.
\end{eqnarray*}

The \textit{singular set} of $\mathcal F$ is by definition equal to  $\sing(\omega)$. The integrability condition (ii) determines in an analytic neighborhood of every regular point $x$, i.e.,
$x \in M \setminus \sing(\omega)$ a holomorphic fibration with
relative tangent sheaf coinciding with the subsheaf of $TM$
determined by the kernel of $\omega$. Analytic continuation of the fibers of this fibration describes the \textit{leaves} of $\mathcal F$.

Let $G$ be a complex algebraic Lie group and $\mathfrak{g}$ its Lie algebra. Let $\mathcal F$ be a singular holomorphic foliation of dimension $k$ on $G$. We shall consider the left Gauss map associated to $\mathcal F$. 

For each $x\in G$ let 
\begin{eqnarray*}
l_x:G &\longrightarrow& G \\
  \tilde{x} &\mapsto& x\tilde{x}
\end{eqnarray*}
be the left translation by $x$ and denote by $\textrm{G} (k,\mathfrak g)$ the grassmannian of $k$--dimensional linear subspaces of $\mathfrak{g}$. For each $x\in G$ regular point of $\mathcal F$, we have a well defined $k$--dimensional linear subspace $T_x\mathcal F\subset T_xG$ which is the tangent space of the leaf of $\mathcal F$ through $x$. Thus the \textit{left Gauss map associated to $\mathcal F$} is the following rational map 
\begin{eqnarray*}
{\mathcal G}(\mathcal F): G &\dashrightarrow& \textrm{G} (k, \mathfrak g) \\
 x &\mapsto& d(l_{x^{-1}})(x)(T_x\mathcal F)
\end{eqnarray*}
defined only on the set of regular points of $\mathcal F$. This is similar to the left Gauss map defined by Kapranov in  \cite{kapranov} for a hypersurface $Z\subset G$.

\begin{example}\label{Ex:gauss}\rm
Let $f \in S_{n}$ be a Laurent polynomial on $n$ variables and $G=(\mathbb C^*)^n$. We consider its zero locus $Z=\{x\in(\mathbb C^*)^n \;|\; f(x)=0\}$. The tangent space of $Z$ at $x$ is given by $T_xZ=\{V\in\mathbb C^n\;|\; f_{x_1}(x)(v_1-x_1)+\cdots+f_{x_n}(x)(v_n-x_n)=0\}$. Taking the left translation to unity of $(\mathbb C^*)^n$ one obtains 
\begin{eqnarray*}
d(l_{x^{-1}})(x)(T_xZ)=\{W\in\mathbb C^n\;|\; x_1f_{x_1}(x)(w_1-1)+\cdots+x_nf_{x_n}(x)(w_n-1)\}.
\end{eqnarray*}
Therefore $d(l_{x^{-1}})(x)(T_xZ)=(x_1f_{x_1}(x):\cdots:x_nf_{x_n}(x))\in\textrm{G} (n-1, \mathfrak g)\cong\mathbb P^{n-1}$. 

Let $\mathcal F$ be  the foliation of dimension $n-1$ on $(\mathbb C^*)^n$ which has as leaves, fibers of the regular function determined by $f$. With the above argument on each leaf we have that the left Gauss map of $\mathcal F$ is just
\begin{eqnarray*}
{\mathcal G}(\mathcal F): (\mathbb C^*)^n &\dashrightarrow& \mathbb P^{n-1} \\
 x &\mapsto& (x_1f_{x_1}(x):\cdots:x_nf_{x_n}(x)).
\end{eqnarray*}
\end{example}

\bigskip

From now on we consider the case $G=(\mathbb C^*)^n$. The left Gauss map for foliations on $(\mathbb C^*)^n$ will be called the \textit{logarithmic Gauss map}. Let $f \in S_{n}$ be a Laurent polynomial on $n$ variables. Suppose the logarithmic polar map $L_f$ has rank $n-k$, $1\le k \le n-1$.  By the implicit function theorem, there is a singular holomorphic foliation $\mathcal F_f$ on $(\mathbb C^*)^n$ of dimension $k$ on which $L_f$ is constant on each leaf. We shall refer to $\mathcal F_f$ as the \textit{foliation determined by fibers} of $L_f$. If $L_f$ has rank $n-k=0$, that is $L_f$ is constant, we still denote by $\mathcal G(\mathcal F_f)$ the constant map taking a point $x\in (\mathbb C^*)^n$ to the Lie algebra $\mathfrak g$. This odd remark will be useful in the proposition below.

\begin{proposition}\label{P:Hesse}
Let $f\in S_{n}$ be a Laurent polynomial. Suppose $f$ has vanishing logarithmic Hessian. Then the logarithmic Gauss map $\mathcal G(\mathcal F_f)$  is constant. 
\end{proposition}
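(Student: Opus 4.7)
The plan is to transfer the problem from the torus to $\mathbb{C}^n$ via the exponential chart, apply Proposition~\ref{T:linearfiber} so that the leaves of $\mathcal{F}_f$ become affine in log coordinates, and then exploit the algebraicity of the fibers of $L_f$ to constrain each leaf-direction to the countable set of rational $k$-planes of $\mathfrak{g}$. A holomorphic map with countable image must be constant.

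Concretely, I would set $g(y) := f(e^{y_1},\dots,e^{y_n})$. Writing $f=\sum_{\alpha\in A}c_\alpha x^\alpha$ with $A\subset\mathbb Z^n$ finite, this is the exponential sum $g(y)=\sum_{\alpha\in A}c_\alpha e^{\alpha\cdot y}$. A direct computation gives
\[
\nabla_g(y)=L_f(e^y),\qquad Hg(y)=\mathrm{diag}(e^{y_1},\dots,e^{y_n})\cdot Af(e^y)^T,
\]
so $\det Af\equiv 0$ is equivalent to $\det Hg\equiv 0$, and a short chain-rule computation (the composition $l_{x^{-1}}\circ\exp$ is itself a translation in the source) yields $\mathcal{G}(\mathcal F_f)(e^y)=\ker Hg(y)$ as a $k$-plane in $\mathfrak g=\mathbb C^n$. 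Applying Proposition~\ref{T:linearfiber} to $g$ on the open set where $\nabla_g$ has constant rank $n-k$, each leaf of $\mathcal F_f$ in log coordinates is an open subset of an affine $k$-plane $y_0+F_{y_0}$, with $F_{y_0}=\ker Hg(y_0)$.

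The main step is to show that $F_{y_0}$ is a rational $k$-plane. Let $\mathcal L$ denote the leaf of $\mathcal F_f$ through $x_0:=e^{y_0}$. As an irreducible component of the fiber of the algebraic map $L_f$, $\mathcal L$ is a $k$-dimensional irreducible algebraic subvariety of $(\mathbb C^*)^n$. At the same time it contains the analytic piece $x_0\cdot\exp(F_{y_0})$, so it also contains its Zariski closure $x_0\cdot H$, where $H:=\overline{\exp(F_{y_0})}^{\mathrm{Zar}}$ is a closed subgroup of the torus, hence an algebraic subgroup. Matching dimensions in $x_0H\subseteq\mathcal L$ forces $\dim H=k$, and hence $F_{y_0}=\mathrm{Lie}(H)$. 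Because the Lie algebra of any algebraic $k$-subtorus of $(\mathbb C^*)^n$ is cut out by integer linear equations on $\mathbb C^n$, $F_{y_0}$ is a rational $k$-plane in $\mathfrak g$.

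To conclude, $\mathcal G(\mathcal F_f):(\mathbb C^*)^n\dashrightarrow \mathrm G(k,\mathfrak g)$ is a rational map, holomorphic on the connected dense regular locus of the foliation, whose image by the previous paragraph lies in the countable set of rational $k$-planes in $\mathbb C^n$. A holomorphic map from a connected complex manifold with countable image is necessarily constant, so $\mathcal G(\mathcal F_f)$ is constant. The delicate step is the algebraicity argument: one must justify that the Zariski closure of the analytic subgroup $\exp(F_{y_0})$ has dimension exactly $k$, which rests on the leaf $\mathcal L$ itself being $k$-dimensional and algebraic.
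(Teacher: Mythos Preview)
Your proof is correct and follows essentially the same route as the paper: pass to $g=f\circ\mathcal E$, invoke Proposition~\ref{T:linearfiber} to make the fibers affine in log coordinates, use the algebraicity of the fibers of $L_f$ to force the direction space $F_{y_0}$ to be the Lie algebra of an algebraic subtorus (hence rational), and conclude by the countable-image argument. The only cosmetic difference is that the paper phrases step three as ``$E=\mathcal E(F)$ is algebraic, hence a coset of some $H_\Lambda$'', whereas you reach the same conclusion via the Zariski closure $H=\overline{\exp(F_{y_0})}$ and a dimension comparison with the leaf $\mathcal L$; both arguments encode the same fact that an analytic subgroup $\exp(F)$ of $(\mathbb C^*)^n$ with algebraic closure of the same dimension must have $F$ defined over $\mathbb Q$.
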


\begin{proof}
We consider the holomorphic map $g:=f\circ\mathcal E$, where $\mathcal E$ is the following recovering of $(\mathbb C^*)^{n}$
\begin{eqnarray*}
\mathcal E:\mathbb C^{n} &\longrightarrow& (\mathbb C^*)^{n}\\
        y    &\mapsto&  \left({\rm{exp}}(y_1),...,{\rm{exp}}(y_n)\right).
\end{eqnarray*}
We have a commutative diagram
\[
 \xymatrix{
 \mathbb C^{n}  \ar[d]_{\mathcal E}  \ar@/^0.4cm/[dr]^{\nabla_g}   \\
  (\mathbb C^*)^{n} \ar@{->}[r]^{L_f} & {\mathbb C}^n}
\]
where $\nabla_g$ is the affine polar map associated to $g$.

We set $x \in (\mathbb C^*)^n$ such that $dL_f(x)$ has maximal rank $n-k$, $1\le k \le n$. If $k=n$, there is nothing to do. Let us suppose $1\le k \le n-1$. Let $E$ be an irreducible component of $L_f^{-1}(L_f(x))$.  It follows from Proposition \ref{T:linearfiber} that $E=\mathcal E (F)$ where $F$ is an affine linear subspace of $\mathbb C^n$. Then $E$ is a left coset of an irreducible algebraic subgroup in $(\mathbb C^*)^n$. But any irreducible algebraic subgroup $H$ of $(\mathbb C^*)^n$ can be written in the form
\begin{eqnarray*}
H=H_{\Lambda}=\{x\in (\mathbb C^*)^n \;|\;\; x^{\lambda}=1 \;\;\forall \lambda \in \Lambda\},
\end{eqnarray*}
where $\Lambda$ is a primitive lattice of $\mathbb Z^n$ of rank $n-k$. So $E=x_0\cdotp H_{\Lambda}$ for some $H_{\Lambda}$. We notice that $H_{\Lambda}=\mathcal E(F_M)$, $F_M=M\otimes \mathbb C$ where $M$ is the primitive lattice
\begin{eqnarray*}
M=\{(y_1,...,y_n)\in \mathbb Z^n \;|\;\; \lambda_1y_1+...+\lambda_ny_n=0 \;\;\forall (\lambda_1,...,\lambda_n)\in\Lambda\}.
\end{eqnarray*}

Let $T_{y}$ be the translation by $y$ in $\mathbb C^n$. By the equality $l_{x^{-1}}\circ \mathcal E=\mathcal E\circ T_{-y}$ one obtains that $d(l_{x^{-1}})(x)(T_x\mathcal F)=F_M$. In particular we can choose a basis of $d(l_{x^{-1}})(x)(T_x\mathcal F)$ of vectors with integer coordinates. Then the image of $\mathcal G(\mathcal F_f)$ is contained in the subset of $\textrm{G} (k, \mathfrak g)=\mathbb G(k-1,\mathbb P \mathfrak g) \subset \mathbb P^N$ on which the points can be chosen with integer coordinates. Since $\mathcal G(\mathcal F_f)$ is a rational map it must be constant.
\end{proof}

\begin{remark}\label{R:left}\rm
We notice that the condition $\mathcal G(\mathcal F_f)$ to be constant means that the irreducible components of  the fibers of $L_f$ are the left cosets of the same subgroup $H_{\Lambda}$ for one fixed primitive lattice $\Lambda$ of $\mathbb Z^n$.
\end{remark}

\begin{remark}\label{R:fiber}\rm

If $f \in \mathbb C[x_0,...,x_n]$ is a homogeneous polynomial, the linear system $<x_0f_{x_0},...,x_nf_{x_n}>$ defines a rational map -- still denoted by -- $L_f:\mathbb P^{n} \longrightarrow \mathbb P^{n}$. It follows from Proposition \ref{P:Hesse} that, under the hypothesis of vanishing logarithmic Hessian, there are linear vector fields $X_i \in \textrm{H}^0(\mathbb P^n, T\mathbb P^n)$, $i=1,...,k$, in the form
\begin{eqnarray*}
\displaystyle{X_i=\sum_{j=0}^{n}\lambda_{ij}x_j\frac{\partial}{\partial x_j}} \;\;;\;\lambda_{ij}\in\mathbb C
\end{eqnarray*}
which are tangent to the fibers of $L_f$. That is, the fibers of $L_f$ are tangents to a Lie subalgebra of $\textrm{H}^0(\mathbb P^n, T\mathbb P^n)$. For the classical polar map $\nabla_f: \mathbb P^n \longrightarrow \mathbb P^n$ defined just by the derivatives $<f_{x_0},...,f_{x_n}>$ it is known that the closure of a generic fiber is a union of linear spaces (see \cite[Proposition 4.9]{Z2}).

Some results have been obtained also to dominant maps. In \cite{B} was obtained a classification of the homogeneous polynomials in $3$ variables on which $L_f$ is birational. This is a variant of the classification obtained in \cite{dolgachev2000polar} of the homogeneous polynomials in $3$ variables on which $\nabla_f$ is birational.
\end{remark}

\section{Hesse's Problem on the Torus}

Given $A=(a_{ij})\in M(n,\mathbb Z)$ a matrix with integer coefficients,  it defines an endomorphism $\xi_A\in{\rm{End}}(\mathbb C^*)^{n}$
\begin{eqnarray*}
\xi_A(x_1,...,x_n)=\left( x_1^{a_{11}}\cdots x_n^{a_{1n}},...,x_1^{a_{n1}}\cdots x_n^{a_{nn}}\right)
\end{eqnarray*}  
which induces an element $\xi_A^* \in {\rm{End}}(S_{n})$, where ${\rm{End}}(S_{n})$ denotes the set of endomorphisms of the coordinate ring of the torus $(\mathbb C^*)^{n}$. We note that $\xi_{AB}=\xi_B\circ \xi_A$, making it clear that, if $\textrm{det}(A)=\pm 1$, then $\xi_A$ is an automorphism with inverse $\xi_{A^{-1}}$. Any automorphism $\xi$ of $(\mathbb C^*)^{n}$ is of the form $\xi=\xi_A$ for some invertible matrix $A \in M(n,\mathbb Z)$ and it is classically called a \textit{monoidal transformation}.

\begin{theorem}\label{T:Hesse}
Let $f\in S_{n}$ be a Laurent polynomial. If $f$ has vanishing logarithmic Hessian of rank $n-k$, $1\le k\le n$, then there is an automorphism $\xi$ of $(\mathbb C^*)^{n}$ such that $\xi^*f \in S_{n-k}$. 
\end{theorem}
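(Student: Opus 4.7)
The plan is to convert the conclusion of Proposition \ref{P:Hesse} into the statement that $L_f$ is genuinely invariant under a fixed algebraic subgroup $H_\Lambda$, then to transfer this invariance to $f$ itself by a support argument, and finally to straighten out $\Lambda$ by a monoidal transformation.

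To begin, by Proposition \ref{P:Hesse} and Remark \ref{R:left}, the irreducible components of the fibers of $L_f$ are left cosets of one fixed primitive algebraic subgroup $H_\Lambda\subset(\mathbb C^*)^n$ of dimension $k$. For a generic $x$ the coset $xH_\Lambda$ is irreducible of dimension $k$ and sits inside the fiber $L_f^{-1}(L_f(x))$, whose components all have that same dimension; hence $xH_\Lambda$ coincides with one of those components and $L_f(xh)=L_f(x)$ for every $h\in H_\Lambda$. Because the coordinate functions of $L_f$ lie in $S_n$, this identity spreads from the generic open set to all of $(\mathbb C^*)^n$. Consequently each Laurent polynomial $x_i f_{x_i}$ is $H_\Lambda$-invariant.

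Next I would read off the support of $f$. A Laurent polynomial $P=\sum_\beta d_\beta x^\beta$ is $H_\Lambda$-invariant iff every character $\chi_\beta$ with $d_\beta\ne 0$ restricts trivially to $H_\Lambda$; since $\Lambda$ is primitive, this is equivalent to $\mathrm{Supp}(P)\subset\Lambda$. Writing $f=\sum_\alpha c_\alpha x^\alpha$ one has $x_if_{x_i}=\sum_\alpha\alpha_i c_\alpha x^\alpha$, so invariance of each $x_if_{x_i}$ forces $\alpha_i c_\alpha\ne 0\Rightarrow \alpha\in\Lambda$ for every $i$. Any nonzero $\alpha$ with $c_\alpha\ne 0$ has some coordinate $\alpha_i\ne 0$ and therefore lies in $\Lambda$, while $\alpha=0\in\Lambda$ trivially. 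Thus $\mathrm{Supp}(f)\subset\Lambda$.

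Finally, set $M=\{\mu\in\mathbb Z^n : \mu\cdot\lambda=0\ \forall\,\lambda\in\Lambda\}$, the primitive rank-$k$ lattice orthogonal to $\Lambda$ (this is, up to identification, the lattice $M$ appearing in the proof of Proposition \ref{P:Hesse}). Choose a $\mathbb Z$-basis $m_1,\dots,m_k$ of $M$ and extend it to a $\mathbb Z$-basis of $\mathbb Z^n$ by appending $v_1,\dots,v_{n-k}$, which is possible because $M$ is primitive. Let $A\in GL_n(\mathbb Z)$ be the matrix whose columns are $v_1,\dots,v_{n-k},m_1,\dots,m_k$; then $\xi_A$ is an automorphism of $(\mathbb C^*)^n$. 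For $\alpha\in\Lambda$ the last $k$ entries of the row vector $\alpha A$ are $\alpha\cdot m_1,\dots,\alpha\cdot m_k$, all zero since each $m_j\in\Lambda^\perp$; hence $\xi_A^*(x^\alpha)=x^{\alpha A}$ involves only the first $n-k$ variables. Applying this to every term of $f$ yields $\xi_A^*f\in S_{n-k}$.

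The main obstacle, I expect, is the first step: upgrading the generic foliation-theoretic picture of Proposition \ref{P:Hesse} and Remark \ref{R:left} to the globally valid identity $L_f(xh)=L_f(x)$ on all of the torus, including non-generic points of $L_f$. Once that is secured, the remaining steps -- the support observation and the Smith-normal-form type choice of basis -- are routine algebra over $\mathbb Z$.
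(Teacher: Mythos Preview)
Your argument is correct. It follows the same overall logic as the paper's proof --- use Proposition~\ref{P:Hesse} and Remark~\ref{R:left} to see that the fibers of $L_f$ are cosets of a fixed subtorus $H_\Lambda$, translate this into a support condition on $f$, and then straighten $\Lambda$ with a matrix in $GL_n(\mathbb Z)$ --- but the execution of the middle step is organised differently. The paper passes to the universal cover via $\mathcal E(y)=(e^{y_1},\dots,e^{y_n})$, chooses the integral basis first, and then argues that the partial derivatives of $\tilde g=f\circ\mathcal E\circ A$ in the last $k$ variables must vanish by expanding $\tilde g$ as a finite sum of exponentials and invoking their linear independence. You stay on the torus throughout: the $H_\Lambda$-invariance of each $x_if_{x_i}$ is read off directly as a support condition via characters, and only afterwards do you apply the monoidal change of coordinates. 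Your route is a bit more algebraic and avoids the analytic detour through $\mathbb C^n$; the paper's route reuses the exponential covering already set up in the proof of Proposition~\ref{P:Hesse}. The ``obstacle'' you flag --- spreading $L_f(xh)=L_f(x)$ from a Zariski-open set to all of $(\mathbb C^*)^n$ --- is handled exactly as you say, since each coordinate of $L_f$ is a Laurent polynomial. One small remark: you only need the last $k$ columns of $A$ to be a $\mathbb Z$-basis of $M$, not that the first $n-k$ columns lie in $\Lambda$; your formulation already reflects this, and it sidesteps a minor issue (in general $\Lambda\oplus\Lambda^\perp\subsetneq\mathbb Z^n$).
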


\begin{proof}
We consider the holomorphic map $g=f\circ\mathcal E$, and let $\nabla_g$ its affine polar map. It follows from Proposition \ref{P:Hesse} that the irreducible components of the fibers of $\nabla_g$ are affine linear spaces $F_M + y_0=\{ y + y_0 \;|\; y\in F_M\}$, $y_0 \in \mathbb C^n$,  that is, the left cosets of $F_M=M\otimes \mathbb C$ in $\mathbb C^n$. Where $M=\Lambda^{\perp}$ is a primitive lattice of $\mathbb Z^n$ of rank $k$.

Let $\{Y_{1},...,Y_{n}\}$ be a basis of $\mathbb Z^n$ such that $Y_{1},...,Y_{n-k}$ is a basis of $\Lambda$ and $Y_{n-k+1},...,Y_{n}$ is a basis of $M$. The matrix $A \in M(n,\mathbb Z)$ in which $A(e_i)=Y_i$, $i=1,...,n$, defines a  linear map $A:\mathbb C^{n} \longrightarrow \mathbb C^{n}$ such that $\textrm{det}(A)=\pm 1$.

We have a commutative diagram
\[
 \xymatrix{
\mathbb C^{n} \ar[r]^{A} \ar[d]_{\mathcal E} & \mathbb C^{n}  \ar[d]_{\mathcal E}  \ar@/^0.4cm/[dr]^{g}  \\
(\mathbb C^*)^{n} \ar[r]^{\xi_A} & (\mathbb C^*)^{n} \ar@{->}[r]^{f} & {\mathbb C.}}
\]
If $\tilde{g}=g\circ A$, then the affine polar map $\nabla_{\tilde{g}}$ is independent of the variables $y_{n-k+1},...,y_n$, that is,
\begin{eqnarray}\label{Eq:integration}
\tilde{g}_{y_1},...,\tilde{g}_{y_n}\in S_{n-k}.
\end{eqnarray}

We set $\tilde{f}=\xi_A^*\cdot f\in S_n$ and assume 
\begin{eqnarray*}
\tilde{f}=\sum_{I}a_I\cdot x_1^{i_1}\cdots x_n^{i_n}, \;\;\;a_I\neq 0,
\end{eqnarray*}
with $I=(i_1,...,i_n)\in \mathbb Z^n$. Hence 
\begin{eqnarray*}
\tilde{g}=\sum_{I}a_I\cdot e^{i_1y_1+...+i_ny_n}, \;\;\;a_I\neq 0.
\end{eqnarray*}
Now we take the derivatives 
\begin{eqnarray*}
\tilde{g}_{y_j}=\sum_{I}a_I\cdot i_j\cdot e^{i_1y_1+...+i_ny_n},
\end{eqnarray*}
for $j=n-k+1,...,n$. Fix $j\in\{n-k+1,...,n\}$. By (\ref{Eq:integration}), $\displaystyle{\tilde{g}_{y_j}}$ is independent of $y_j$, therefore $i_j=0$ for all index $I$ (consequently $\displaystyle{\tilde{g}_{y_j}}=0$). This shows that 
\begin{eqnarray*}
\tilde{f}=\sum_{I}a_I\cdot x_1^{i_1}\cdots x_{n-k}^{i_{n-k}} \in S_{n-k}.
\end{eqnarray*}
\end{proof}

\bibliographystyle{plain}
\bibliography{new-hesse}
\end{document}